\newfont{\bb}{msbm10 at 12pt}
\def\r{\hbox{\bb R}}
\def\e{\hbox{\bf E}}
\def\t{\hbox{\bf T}}
\def\n{\hbox{\bf N}}
\def\b{\hbox{\bf B}}
\newtheorem{theorem}{Theorem}[section]
\newtheorem{definition}[theorem]{Definition}
\begin{document}

\title{Slant helices in Euclidean 4-space $\e^4$}
\author{ Ahmad T. Ali\\Mathematics Department\\
 Faculty of Science, Al-Azhar University\\
 Nasr City, 11448, Cairo, Egypt\\
email: atali71@yahoo.com\\
\vspace*{1cm}\\
 Rafael L\'opez\footnote{Partially
supported by MEC-FEDER
 grant no. MTM2007-61775.}\\
Departamento de Geometr\'{\i}a y Topolog\'{\i}a\\
Universidad de Granada\\
18071 Granada, Spain\\
email: rcamino@ugr.es}
\date{}

\maketitle
\begin{abstract} We consider a unit speed curve $\alpha$ in Euclidean four-dimensional space $\e^4$ and denote the Frenet frame by $\{\t,\n,\b_1,\b_2\}$. We say that $\alpha$ is a slant helix if its principal normal vector $\n$ makes a constant angle  with a fixed direction  $U$. In this work we give different characterizations of such curves in terms of their curvatures.

\end{abstract}

\emph{MSC:}  53C40, 53C50

\emph{Keywords}:  Euclidean 4-space; Frenet equations;  slant helices.

\section{Introduction and statement of results}
A helix in Euclidean 3-space $\e^3$ is a curve whose tangent lines make a constant angle with a fixed direction. A helix curve is characterized by the fact that the ratio $\tau/\kappa$ is constant along the curve, where $\tau$ and $\kappa$ denote the torsion and the curvature, respectively. Helices are well known curves in classical differential geometry of space curves \cite{mp} and we refer to the reader for recent
works on this type of curves \cite{gl1,sc}.
Recently, Izumiya and Takeuchi have introduced the concept of slant helix by saying that the normal lines make a constant angle with a fixed direction \cite{it1}. They characterize a slant helix if and only if the function
$$\dfrac{\kappa^2}{(\kappa^2+\tau^2)^{3/2}}\Big(\dfrac{\tau}{\kappa}\Big)'$$
is constant. This article  motivated generalizations in a twofold sense: first, by increasing the dimension of Euclidean space \cite{ky,okkk}; second, by considering analogous problems in other ambient spaces, mainly, in  Minkowski space $\e_1^n$ \cite{al1,al2,ba,ey1,ko,ps}.

In this work we consider the generalization of the concept of slant helix in Euclidean 4-space $\e^4$. Let $\alpha:I\subset\r\rightarrow\e^4$ be an arbitrary curve in $\e^4$. Recall that the curve $\alpha$ is said to be of unit speed (or parameterized by arclength function $s$) if $\langle\alpha'(s),\alpha'(s)\rangle=1$, where $\langle,\rangle$ is the standard scalar product in the Euclidean space $\e^4$ given by
$$\langle X,Y\rangle=x_1y_1+x_2y_2+x_3y_3+x_4y_4,$$
for each $X=(x_1,x_2,x_3,x_4)$, $Y=(y_1,y_2,y_3,y_4)\in\e^4$.

Let $\{\t(s),\n(s),\b_1(s),\b_2(s)\}$ be the moving frame along $\alpha$, where $\t,\n,\b_1$ and $\b_2$ denote the tangent, the principal normal, the first binormal and second binormal vector fields, respectively. Here $\t(s)$, $\n(s)$, $\b_1(s)$ and $\b_2(s)$ are mutually orthogonal vectors satisfying
$$\langle\t,\t\rangle=\langle\n,\n\rangle=\langle\b_1,\b_1\rangle=\langle\b_2,\b_2\rangle=1.$$
The Frenet equations for $\alpha$ are given by
\begin{equation}\label{u2}
 \left[
   \begin{array}{c}
     \t' \\
     \n' \\
     \b_1' \\
     \b_2'\\
   \end{array}
 \right]=\left[
           \begin{array}{cccc}
             0 & \kappa_1 & 0 & 0 \\
             -\kappa_1 & 0 & \kappa_2 & 0 \\
             0 & -\kappa_2 & 0 & \kappa_3 \\
             0 & 0 & -\kappa_3 & 0 \\
           \end{array}
         \right]\left[
   \begin{array}{c}
     \t \\
     \n \\
     \b_1 \\
     \b_2\\
   \end{array}
 \right].
 \end{equation}

Recall the functions $\kappa_1(s)$, $\kappa_2(s)$ and $\kappa_3(s)$ are called respectively, the first, the second and the third curvatures of $\alpha$. If $\kappa_3(s)=0$ for any $s\in I$, then $\b_2(s)$ is a constant vector $B$ and the curve $\alpha$ lies in a three-dimensional affine subespace orthogonal to $B$, which is isometric to the Euclidean 3-space $\e^3$.

We will assume throughout this work that all the three curvatures satisfy $\kappa_i(s)\not=0$ for any $s\in I$, $1\leq i\leq 3$.

\begin{definition} \label{df-1} A unit speed curve $\alpha:I\rightarrow\e^4$ is said to be a generalized  helix if there exists a non-zero constant vector field $U$ and a vector field $X\in\{\t,\n,\b_1,\b_2\}$ such that the function
$$s\longmapsto\langle X(s),U\rangle,\hspace*{.5cm}s\in I$$
 is constant.
\end{definition}

Among the possibilities to choose the vector field $X$ we have:

\begin{enumerate}
\item If $X$ is the unit tangent vector field $\t$, $\alpha$ is called a cylindrical helix. It is known that $\alpha(s)$ is a  cylindrical helix if and only if the function
$$
\dfrac{\kappa_1^2}{\kappa_2^2}+\Big[\dfrac{1}{\kappa_3}\Big(\dfrac{\kappa_1}{\kappa_2}\Big)^{\prime}\Big]^2$$
is constant. See \cite{ma, oh}.
\item If $X$ is the vector field  $\b_2$, then the curve is called a $\b_2$-slant curve. Moreover $\alpha$ is a such curve if and only if
the function
$$
\dfrac{\kappa_3^2}{\kappa_2^2}+\Big[\dfrac{1}{\kappa_1}\Big(\dfrac{\kappa_3}{\kappa_2}\Big)^{\prime}\Big]^2$$
is constant. See \cite{okkk}.
\end{enumerate}

\begin{definition} A unit speed curve $\alpha:I\rightarrow\e^4$ is called slant helix if its unit principal normal vector $\n$ makes a constant angle  with a fixed direction  $U$.
\end{definition}

Our main result in this work is the following characterization of slant helices.

\begin{theorem}\label{th-main} Let $\alpha:I\rightarrow\e^4$ be a unit speed curve in $\e^4$. Then $\alpha$  is a slant helix if and only if
the function
$$
\Big(\int\kappa_1(s)ds\Big)^2
+\Big[\dfrac{1}{\kappa_3}\Big(\dfrac{\kappa_1}{\kappa_2}\int\kappa_1ds\Big)^{\prime}+\dfrac{\kappa_2}{\kappa_3}\Big]^2
+
\Big(\dfrac{\kappa_1}{\kappa_2}\int\kappa_1ds\Big)^2$$
is constant. Moreover, this constant agrees with $\tan^2\theta$, being $\theta$ the angle that makes $\n$ with the fixed direction $U$ that determines $\alpha$.
\end{theorem}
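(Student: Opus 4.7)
The plan is to reduce everything to an explicit ODE system by expanding the (presumed constant) direction $U$ in the Frenet frame of $\alpha$. Write
$$U(s) = a(s)\t(s) + b(s)\n(s) + c(s)\b_1(s) + d(s)\b_2(s),$$
and normalize so that $|U|=1$. The slant helix condition is exactly $b(s) = \cos\theta$, a constant. Differentiating $U$ and substituting the Frenet equations (\ref{u2}), the vectorial identity $U'=0$ decouples into the four scalar ODEs
$$a' = b\kappa_1,\qquad b' = -a\kappa_1 + c\kappa_2,\qquad c' = -b\kappa_2 + d\kappa_3,\qquad d' = -c\kappa_3.$$

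\textbf{Forward direction.} Assuming $b=\cos\theta$, the first equation integrates to $a = \cos\theta\int\kappa_1 ds$ for a suitable choice of primitive; the second (with $b'=0$) gives $c = (\kappa_1/\kappa_2)\,a$; the third yields
$$d = \cos\theta\Big[\dfrac{1}{\kappa_3}\Big(\dfrac{\kappa_1}{\kappa_2}\int\kappa_1 ds\Big)^{\prime} + \dfrac{\kappa_2}{\kappa_3}\Big].$$
Now $a^2+b^2+c^2+d^2 = |U|^2 = 1$ rearranges into $a^2+c^2+d^2 = \sin^2\theta$; dividing by $\cos^2\theta$ reproduces the displayed expression in the theorem and identifies it with $\tan^2\theta$.

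\textbf{Converse.} Given that the displayed expression is a constant, call it $\tan^2\theta$, define $a,b,c,d$ by the formulas above and set $U := a\t + b\n + c\b_1 + d\b_2$. By construction the $\t$, $\n$, $\b_1$ components of $U'$ vanish (these encode the first three ODEs); the content to verify is the fourth equation $d' + c\kappa_3 = 0$, i.e., vanishing of the $\b_2$-component. The key observation is that $a^2+c^2+d^2 = \sin^2\theta$ is constant by hypothesis, so differentiating and substituting the first three ODEs makes almost every term cancel, leaving precisely $d(d'+c\kappa_3)=0$; away from the zero set of $d$ this gives the desired fourth equation. Hence $U$ is constant, and $\langle U,\n\rangle = b = \cos\theta$ shows $\alpha$ is a slant helix.

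\textbf{Anticipated obstacles.} The main subtlety I expect lies in the converse: controlling the zero set of $d$ to upgrade $d(d'+c\kappa_3)=0$ into $d'+c\kappa_3=0$ everywhere (a continuity/analyticity argument, or a direct check that where $d$ vanishes the other terms also vanish). A secondary bookkeeping point is the freedom in the integration constant of $\int\kappa_1 ds$, which parametrizes the one-parameter family of admissible axes $U$ sharing the same angle $\theta$ with $\n$; one must fix this primitive consistently so that the expressions for $a$, $c$, $d$ derived above are compatible with the prescribed $U$.
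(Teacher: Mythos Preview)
Your forward direction is correct and considerably more direct than the paper's. Both arguments begin by expanding $U$ in the Frenet frame and extracting $a,c,d$ from the first three scalar components of $U'=0$; at that point you simply invoke $|U|^2=1$ to obtain $a^2+c^2+d^2=\sin^2\theta$ and are finished. The paper instead feeds the fourth equation back in, changes variable to $t=\int\kappa_3\,ds$, solves the resulting second-order linear ODE for $a_4$ explicitly, and only after a page of integral manipulations (equations (\ref{u9})--(\ref{u16})) reaches the same constancy; the constant is then identified as $\tan^2\theta$ via the very unit-length observation you used at the outset. The paper's detour is unnecessary for Theorem \ref{th-main} itself, though the explicit integral formulas it produces are recycled in the later integral characterization (Theorem \ref{th-33}).

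For the converse both you and the paper construct the same candidate $U$ and claim $U'=0$. The paper simply asserts this follows ``by taking account of (\ref{u17})''; you go further and compute that differentiating $a^2+c^2+d^2=\mathrm{const}$ together with the first three ODEs yields only $d(d'+c\kappa_3)=0$, then flag the zero set of $d$ as an obstacle. That gap is real, and it is present in the paper as well: the passage from (\ref{u19}) to (\ref{u20}) in the proof of Theorem \ref{th-2} is exactly the division by $d/\cos\theta$ you are worried about, hidden behind ``after some manipulations''. Your proposed continuity argument handles the case where $\{d=0\}$ has empty interior; if $d$ vanishes on an open subinterval one must argue separately, and neither you nor the paper supply that step.
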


\section{Proof of Theorem \ref{th-main}}

Let  $\alpha$ be a unit speed curve in $\e^4$. Assume that $\alpha$ is a slant curve. Let $U$ be the direction with which $\n$ makes a constant angle $\theta$ (suppose that $\langle U,U\rangle=1$).  Consider the differentiable functions $a_i$, $1\leq i\leq 4$,
\begin{equation}\label{u3}
U=a_1(s)\t(s)+a_2(s) \n(s)+a_3(s) \b_1(s)+a_4(s)\b_2(s),\ \ s\in I,
\end{equation}
that is,
$$a_1=\langle\t,U\rangle, \ a_2=\langle\n,U\rangle,\ a_3=\langle\b_1,U\rangle,\ a_4=\langle\b_2,U\rangle.$$
Because the vector field $U$ is constant, a differentiation in (\ref{u3}) together (\ref{u2}) gives the following ordinary differential equation system
\begin{equation}\label{u4}
\left.\begin{array}{ll}
a_1'-\kappa_1 a_2&=0\\
a_2'+\kappa_1 a_1-\kappa_2 a_3&=0\\
a_3'+\kappa_2 a_2-\kappa_3 a_4&=0\\
a_4'+\kappa_3 a_3 &=0
\end{array}\right\}
\end{equation}
Then the function  $a_2(s)=\langle \n(s),U\rangle$ is constant, and it agrees with $\cos\theta$. Then (\ref{u4}) gives
\begin{equation}\label{u5}
\left.\begin{array}{ll}
a_1'-\kappa_1 a_2&=0\\
\kappa_1 a_1-\kappa_2 a_3&=0\\
a_3'+\kappa_2 a_2-\kappa_3 a_4&=0\\
a_4'+\kappa_3 a_3 &=0
\end{array}\right\}
\end{equation}
The first third equations in (\ref{u5}) lead to
\begin{equation}\label{u6}
\left.\begin{array}{ll}
a_1&=a_2\int\kappa_1ds\\
a_3&=a_2\dfrac{\kappa_1}{\kappa_2}\int\kappa_1ds\\
a_4&=a_2\Big[\dfrac{1}{\kappa_3}\Big(\dfrac{\kappa_1}{\kappa_2}\int\kappa_1ds\Big)^{\prime}+\dfrac{\kappa_2}{\kappa_3}\Big]
\end{array}\right\}
\end{equation}
We do the change of variables:
$$t(s)=\int^s\kappa_3(u) du,\hspace*{.5cm}\frac{dt}{ds}=\kappa_3(s).$$
In particular, and from  (\ref{u5}), we have
$$a_3'(t)=a_4-a_2\frac{\kappa_2}{\kappa_3}.$$
As a consequence, if $\alpha$ is a slant helix, the last equation of (\ref{u5}) yields
\begin{equation}\label{u8}
a_4''(t)+a_4(t)-a_2\frac{\kappa_2(t)}{\kappa_3(t)}=0.
\end{equation}
The general solution of this equation is
\begin{equation}\label{u9}
a_4(t)=a_2\Bigg[\Big(A-\int\dfrac{\kappa_2(t)}{\kappa_3(t)}\sin{t}\,dt\Big)\cos{t}+
\Big(B+\int\dfrac{\kappa_2(t)}{\kappa_3(t)}\cos{t}\,dt\Big)\sin{t}\Bigg],
\end{equation}
where $A$ and $B$ are arbitrary constants. Then (\ref{u9}) takes the following form
\begin{equation}\label{u10}
\begin{array}{ll}
a_4(s)=&a_2\Big[\Big(A-\int\Big[\kappa_2(s)\sin{\int\kappa_3(s)ds}\Big]ds\Big)\cos{\int\kappa_3(s)ds}\\
&+
\Big(B+\int\Big[\kappa_2(s)\cos{\int\kappa_3(s)ds}\Big]ds\Big)\sin{\int\kappa_3(s)ds}\Big].
\end{array}
\end{equation}
From (\ref{u5}), the function $a_3$ is given by
\begin{equation}\label{u11}
\begin{array}{ll}
a_3(s)=&a_2\Big[\Big(A-\int\Big[\kappa_2(s)\sin{\int\kappa_3(s)ds}\Big]ds\Big)\sin{\int\kappa_3(s)ds}\\
&-\Big(B+\int\Big[\kappa_2(s)\cos{\int\kappa_3(s)ds}\Big]ds\Big)\cos{\int\kappa_3(s)ds}\Big].
\end{array}
\end{equation}
From (\ref{u10}), (\ref{u11}) and (\ref{u6}) we have the following two conditions:
\begin{equation}\label{u12}
\begin{array}{ll}
\dfrac{1}{\kappa_3}\Big(\dfrac{\kappa_1}{\kappa_2}\int\kappa_1ds\Big)^{\prime}+\dfrac{\kappa_2}{\kappa_3}
=&\Big(A-\int\Big[\kappa_2(s)\sin{\int\kappa_3(s)ds}\Big]ds\Big)\cos{\int\kappa_3(s)ds}\\
&+
\Big(B+\int\Big[\kappa_2(s)\cos{\int\kappa_3(s)ds}\Big]ds\Big)\sin{\int\kappa_3(s)ds}.
\end{array}
\end{equation}
and
\begin{equation}\label{u13}
\begin{array}{ll}
\dfrac{\kappa_1}{\kappa_2}\int\kappa_1ds=&\Big(A-\int\Big[\kappa_2(s)\sin{\int\kappa_3(s)ds}\Big]ds\Big)\sin{\int\kappa_3(s)ds}\\
&-\Big(B+\int\Big[\kappa_2(s)\cos{\int\kappa_3(s)ds}\Big]ds\Big)\cos{\int\kappa_3(s)ds}.
\end{array}
\end{equation}
The condition (\ref{u13}) can be written as follows:
$$\begin{array}{ll}
\kappa_1(s)\int\kappa_1(s)ds=&\Big(A-\int\Big[\kappa_2(s)\sin{\int\kappa_3(s)ds}\Big]ds\Big)\kappa_2(s)\sin{\int\kappa_3(s)ds}\\
&-\Big(B+\int\Big[\kappa_2(s)\cos{\int\kappa_3(s)ds}\Big]ds\Big)\kappa_2(s)\cos{\int\kappa_3(s)ds}.
\end{array}$$
If we integrate the above equation we have
\begin{equation}\label{u15}
\begin{array}{ll}
\Big(\int\kappa_1(s)ds\Big)^2=&C-\Big(A-\int\Big[\kappa_2(s)\sin{\int\kappa_3(s)ds}\Big]ds\Big)^2
\\
&-\Big(B+\int\Big[\kappa_2(s)\cos{\int\kappa_3(s)ds}\Big]ds\Big)^2
,
\end{array}
\end{equation}
where $C$ is a constant of integration. From Equations (\ref{u12}) and (\ref{u13}), we get
\begin{equation}\label{u16}
\begin{array}{ll}
\Big[\dfrac{1}{\kappa_3}\Big(\dfrac{\kappa_1}{\kappa_2}\int\kappa_1ds\Big)^{\prime}+\dfrac{\kappa_2}{\kappa_3}\Big]^2+
\Big(\dfrac{\kappa_1}{\kappa_2}\int\kappa_1ds\Big)^2
&=\Big(A-\int\Big[\kappa_2(s)\sin{\int\kappa_3(s)ds}\Big]ds\Big)^2\\
&+
\Big(B+\int\Big[\kappa_2(s)\cos{\int\kappa_3(s)ds}\Big]ds\Big)^2.
\end{array}
\end{equation}
Now Equations (\ref{u15}) and (\ref{u16}) give
\begin{equation}\label{u17}
\Big(\int\kappa_1(s)ds\Big)^2
+\Big[\dfrac{1}{\kappa_3}\Big(\dfrac{\kappa_1}{\kappa_2}\int\kappa_1ds\Big)^{\prime}+\dfrac{\kappa_2}{\kappa_3}\Big]^2
+
\Big(\dfrac{\kappa_1}{\kappa_2}\int\kappa_1ds\Big)^2
=C.
\end{equation}
Moreover this constant $C$ calculates as follows. From (\ref{u17}), together the three equations (\ref{u6}) we have
$$C=\frac{a_1^2+a_3^2+a_4^2}{a_2^2}=\frac{1-a_2^2}{a_2^2}=\tan^2\theta,$$
where we have used  (\ref{u3}) and the fact that $U$ is a unit vector field.

We do the converse of the proof. Assume that the condition (\ref{u17}) is satisfied for a curve $\alpha$. Let $\theta\in\r$ be so that $C=\tan^2\theta$. Define the unit vector $U$  by
\begin{equation}\label{u18}
U=\cos\theta\Bigg[\int\kappa_1ds\,\t+
\n+\dfrac{\kappa_1}{\kappa_2}\int\kappa_1ds\,\b_1
+\Big[\dfrac{1}{\kappa_3}\Big(\dfrac{\kappa_1}{\kappa_2}\int\kappa_1ds\Big)^{\prime}
+\dfrac{\kappa_2}{\kappa_3}\Big]\b_2\Bigg].
\end{equation}
By taking account  (\ref{u17}), a differentiation of $U$ gives that $\dfrac{dU}{ds}=0$, which it means that $U$ is a constant vector. On the other hand, the scalar product between  the unit principal normal vector field $\n$ with $U$ is
$$\langle\n(s),U\rangle=\cos\theta.$$
Thus $\alpha$ is a slant curve. This finishes with the proof of Theorem \ref{th-main}.

\section{Further characterizations of slant helices}

In this section we present two new characterizations of slant helices. The first one is a consequence of Theorem \ref{th-main}.

\begin{theorem} \label{th-2}Let $\alpha:I\subset R\rightarrow \e^4$ be a unit speed curve in Euclidean space $\e^4$. Then $\alpha$ is a slant helix if and only if there exists a $C^2$-function $f$ such that
\begin{equation}\label{u24}
\kappa_3\,f(s)=\Big(\dfrac{\kappa_1}{\kappa_2}\int\kappa_1ds\Big)^{\prime}
+\kappa_2,\hspace*{1cm}\dfrac{d}{ds}f(s)=-\dfrac{\kappa_3\kappa_1}{\kappa_2}\int\kappa_1ds.
\end{equation}
\end{theorem}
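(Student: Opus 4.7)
The plan is to leverage Theorem~\ref{th-main} directly: its characterizing expression is exactly $A(s)^2+f(s)^2+B(s)^2$ once we set
$$A(s):=\int\kappa_1\,ds,\qquad B(s):=\frac{\kappa_1}{\kappa_2}\int\kappa_1\,ds,\qquad f(s):=\frac{1}{\kappa_3}\Big(\frac{\kappa_1}{\kappa_2}\int\kappa_1\,ds\Big)^{\prime}+\frac{\kappa_2}{\kappa_3}.$$
With this $f$ the first identity in (\ref{u24}) is just the definition of $f$, rewritten as $\kappa_3 f=B'+\kappa_2$, and the second identity reads $f'=-\kappa_3 B$. So the theorem reduces to proving, under the slant helix hypothesis, that $f'=-\kappa_3 B$ holds, and conversely that the two identities together force $A^2+f^2+B^2$ to be constant, at which point Theorem~\ref{th-main} finishes the job.

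For the direct implication I would argue in the notation of Section~2: equations (\ref{u5})--(\ref{u6}) give $a_3=a_2 B$ and $a_4=a_2 f$, where $a_2=\cos\theta$ is a nonzero constant. Dividing the fourth equation of the system (\ref{u5}), namely $a_4'+\kappa_3 a_3=0$, by $a_2$ gives exactly $f'=-\kappa_3 B$, i.e.\ the second relation in (\ref{u24}). The first relation is the definition of $f$, which we already knew; so both equations of (\ref{u24}) are established.

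For the converse, assume a $C^2$ function $f$ satisfies (\ref{u24}). Differentiate $G:=A^2+f^2+B^2$ and substitute $A'=\kappa_1$, $B'=\kappa_3 f-\kappa_2$ (from the first relation), and $f'=-\kappa_3 B$ (from the second):
$$G'=2\kappa_1 A+2f(-\kappa_3 B)+2B(\kappa_3 f-\kappa_2)=2\kappa_1 A-2\kappa_2 B=2\kappa_1 A-2\kappa_1 A=0,$$
where the final cancellation uses $B=(\kappa_1/\kappa_2)A$. Hence $G$ is constant and Theorem~\ref{th-main} implies that $\alpha$ is a slant helix.

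There is essentially no obstacle beyond the initial act of guessing the right $f$; once $f$ is recognized as the $\b_2$-component (normalized by $\cos\theta$) of the constant direction $U$ constructed in the proof of Theorem~\ref{th-main}, both relations in (\ref{u24}) are a one-line reading of the Frenet system (\ref{u5}), and the converse is the same computation run backwards.
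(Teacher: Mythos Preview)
Your argument is correct. The overall strategy matches the paper's---both proofs rest on recognizing $f$ as the (normalized) $\b_2$-coefficient of the constant direction $U$---but the execution differs in two small and instructive ways. For the forward implication the paper differentiates the scalar invariant of Theorem~\ref{th-main} (your $G=A^2+f^2+B^2$) and then simplifies to reach $f'+\kappa_3 B=0$; that simplification actually yields $f\,(f'+\kappa_3 B)=0$, so a tacit division by $f$ is involved. By instead reading the identity directly from the fourth equation of the Frenet system~(\ref{u5}), you sidestep that issue entirely and the argument is cleaner. For the converse the paper reconstructs the vector $U$ from scratch and checks that it is constant, effectively re-proving the converse of Theorem~\ref{th-main}; your route---show $G'=0$ and invoke Theorem~\ref{th-main}---is more economical and makes the logical dependence on the main theorem explicit. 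Either approach works; yours is slightly tighter.
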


\begin{proof}
Let now assume that $\alpha$ is a slant helix. A differentiation of (\ref{u17}) gives
\begin{equation}\label{u19}
\begin{array}{ll}
&\Big(\int\kappa_1(s)ds\Big)\Big(\int\kappa_1(s)ds\Big)^{\prime}
+\Big(\dfrac{\kappa_1}{\kappa_2}\int\kappa_1ds\Big)\Big(\dfrac{\kappa_1}{\kappa_2}\int\kappa_1ds\Big)^{\prime}\\
&+\Big[\dfrac{1}{\kappa_3}\Big(\dfrac{\kappa_1}{\kappa_2}\int\kappa_1ds\Big)^{\prime}+\dfrac{\kappa_2}{\kappa_3}\Big]
\Big[\dfrac{1}{\kappa_3}\Big(\dfrac{\kappa_1}{\kappa_2}\int\kappa_1ds\Big)^{\prime}+\dfrac{\kappa_2}{\kappa_3}\Big]^{\prime}
=0.
\end{array}
\end{equation}
After some manipulations,  the equation (\ref{u19}) takes the following form
\begin{equation}\label{u20}
\dfrac{\kappa_1\kappa_3}{\kappa_2}\int\kappa_1ds+\Big[\dfrac{1}{\kappa_3}\Big(\dfrac{\kappa_1}{\kappa_2}\int\kappa_1ds\Big)^{\prime}
+\dfrac{\kappa_2}{\kappa_3}\Big]^{\prime}=0.
\end{equation}
If we define $f=f(s)$ by
$$\kappa_3\,f(s)=\Big(\dfrac{\kappa_1}{\kappa_2}\int\kappa_1ds\Big)^{\prime}
+\kappa_2.$$
Then Equation (\ref{u20}) writes as
$$\dfrac{d}{ds}f(s)=-\dfrac{\kappa_3\kappa_1}{\kappa_2}\int\kappa_1 ds.$$
Conversely, if (\ref{u24}) holds, we define a unit constant vector $U$ by
$$U=\cos\theta\Bigg[\int\kappa_1ds\,\t+
\n+\dfrac{\kappa_1}{\kappa_2}\int\kappa_1ds\,\b_1
+f(s)\b_2\Bigg].$$
We have that $\langle\n(s),U\rangle=\cos\theta$ is constant, that is, $\alpha$ is a slant helix.
\end{proof}

We end  giving an integral characterization of a slant helix.

\begin{theorem}\label{th-33} Let  $\alpha:I\subset R\rightarrow \e^4$ be a unit speed curve in Euclidean space $\e^4$. Then $\alpha$ is a slant helix if and only if the following condition is satisfied
\begin{equation}\label{u244}
\begin{array}{ll}
\dfrac{\kappa_1}{\kappa_2}\int\kappa_1ds=&\Big(A-\int\Big[\kappa_2(s)\sin{\int\kappa_3(s)ds}\Big]ds\Big)\sin{\int\kappa_3(s)ds}\\
&-\Big(B+\int\Big[\kappa_2(s)\cos{\int\kappa_3(s)ds}\Big]ds\Big)\cos{\int\kappa_3(s)ds},
\end{array}
\end{equation}
for some constants $A$ and $B$.
\end{theorem}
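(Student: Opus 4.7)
The forward direction is essentially already contained in the proof of Theorem \ref{th-main}. There, assuming $\alpha$ is a slant helix and writing $U=a_1\t+a_2\n+a_3\b_1+a_4\b_2$, the constancy of $U$ together with the Frenet equations forces $a_2=\cos\theta$ (a constant) and leads to system (\ref{u5}); solving the resulting second-order ODE for $a_4$ gives formulas (\ref{u10}) and (\ref{u11}) with two integration constants $A,B$. Comparing the expression for $a_3$ from (\ref{u11}) with the relation $a_3=a_2\,\tfrac{\kappa_1}{\kappa_2}\int\kappa_1\,ds$ from (\ref{u6}) is precisely the identity (\ref{u244}). So the forward implication is immediate.

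For the converse, my plan is to reduce to Theorem \ref{th-main} by showing that (\ref{u244}) implies (\ref{u17}). Introduce the shorthand
$$P(s)=A-\int\kappa_2(s)\sin\!\Big(\!\int\!\kappa_3\,ds\Big)ds,\qquad Q(s)=B+\int\kappa_2(s)\cos\!\Big(\!\int\!\kappa_3\,ds\Big)ds,\qquad T(s)=\int\kappa_3(s)\,ds,$$
so that $P'=-\kappa_2\sin T$ and $Q'=\kappa_2\cos T$, and (\ref{u244}) becomes $\tfrac{\kappa_1}{\kappa_2}\int\kappa_1\,ds=P\sin T-Q\cos T$.

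The key steps are then: (i) multiply (\ref{u244}) by $\kappa_2$ to obtain $\kappa_1\int\kappa_1\,ds=\kappa_2(P\sin T-Q\cos T)=-PP'-QQ'=-\tfrac{1}{2}(P^2+Q^2)'$; recognizing the left side as $\tfrac{1}{2}\big((\int\kappa_1\,ds)^2\big)'$ and integrating yields $(\int\kappa_1\,ds)^2+P^2+Q^2=C$ for some constant $C$, which is (\ref{u15}). (ii) Differentiate (\ref{u244}) directly; a short computation using $T'=\kappa_3$ produces
$$\Big(\tfrac{\kappa_1}{\kappa_2}\!\int\!\kappa_1\,ds\Big)'+\kappa_2=\kappa_3(P\cos T+Q\sin T),$$
hence $P\cos T+Q\sin T=\tfrac{1}{\kappa_3}\big[(\tfrac{\kappa_1}{\kappa_2}\!\int\!\kappa_1\,ds)'+\kappa_2\big]$. (iii) Apply the Pythagorean identity $(P\sin T-Q\cos T)^2+(P\cos T+Q\sin T)^2=P^2+Q^2$ and substitute the two expressions obtained in terms of the curvatures; this is precisely (\ref{u16}). (iv) Adding (\ref{u15}) and (\ref{u16}) eliminates $P^2+Q^2$ and delivers (\ref{u17}) with the same constant $C$. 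Invoking Theorem \ref{th-main} concludes that $\alpha$ is a slant helix.

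The only real work is the bookkeeping in steps (i)--(iii); the main obstacle, such as it is, is carefully tracking the trigonometric cancellations when differentiating $P\sin T-Q\cos T$ and recognizing $-PP'-QQ'$ as a perfect derivative. Beyond that, the argument is a direct translation between the integral form (\ref{u244}) and the differential form (\ref{u17}) of the slant-helix condition, with Theorem \ref{th-main} supplying the bridge back to the geometric characterization.
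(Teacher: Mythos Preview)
Your argument is correct, but both directions are organized differently from the paper.

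For the forward implication you simply observe that (\ref{u244}) is exactly equation (\ref{u13}), which was already derived inside the proof of Theorem~\ref{th-main}; this is perfectly legitimate and more economical than what the paper does. The paper instead routes the forward direction through Theorem~\ref{th-2}: starting from the function $f$ satisfying (\ref{u24}), it sets $\phi=\int\kappa_3\,ds$ and forms the combinations $m=f\cos\phi+(\tfrac{\kappa_1}{\kappa_2}\int\kappa_1 ds)\sin\phi+\int\kappa_2\sin\phi\,ds$ and $n=f\sin\phi-(\tfrac{\kappa_1}{\kappa_2}\int\kappa_1 ds)\cos\phi-\int\kappa_2\cos\phi\,ds$, checks $m'=n'=0$, and solves for $\tfrac{\kappa_1}{\kappa_2}\int\kappa_1 ds$.

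For the converse you reduce to Theorem~\ref{th-main} by reconstructing (\ref{u15}), (\ref{u16}) and hence (\ref{u17}) from (\ref{u244}); the paper instead reduces to Theorem~\ref{th-2}, defining $f(s)=P\cos T+Q\sin T$ (in your notation) and verifying the two relations in (\ref{u24}) directly. Your step~(ii) is in fact the first of those two relations, so the approaches share that computation; the paper then just differentiates $f$ once more rather than carrying out your integration step~(i) and the Pythagorean recombination (iii)--(iv). The paper's path is marginally shorter because (\ref{u24}) is a lighter target than (\ref{u17}); your path has the virtue of depending only on Theorem~\ref{th-main} and not on the intermediate Theorem~\ref{th-2}.
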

\begin{proof}
Suppose that $\alpha$ is a slant helix. By using  Theorem \ref{th-2}, let define $m(s)$ and $n(s)$ by
\begin{equation}\label{u25}
\phi=\phi(s)=\int^s\kappa_3(u)du,
\end{equation}
\begin{equation}\label{u26}
\begin{array}{ll}
m(s)=&f(s)\cos\phi+\Big(\dfrac{\kappa_1}{\kappa_2}\int\kappa_1ds\Big)\sin\phi+\int\Big[\kappa_2\sin\phi\Big]ds,\\
n(s)=&f(s)\sin\phi-\Big(\dfrac{\kappa_1}{\kappa_2}\int\kappa_1ds\Big)\cos\phi-\int\Big[\kappa_2\cos\phi\Big]ds.
\end{array}
\end{equation}
If we differentiate Equations (\ref{u26}) with respect to $s$ and taking into  account of (\ref{u25}) and (\ref{u24}), we
obtain  $\dfrac{dm}{ds}=0$ and $\dfrac{dn}{ds}=0$. Therefore, there exist constants $A$ and $B$ such that $m(s)=A$ and $n(s)=B$.
By substituting  into (\ref{u26}) and solving the resulting equations for $\dfrac{\kappa_1}{\kappa_2}\int\kappa_1ds$, we get
$$
\dfrac{\kappa_1}{\kappa_2}\int\kappa_1ds=\Big(A-\int\Big[\kappa_2(s)\sin{\phi}\Big]ds\Big)\sin{\phi}
-\Big(B+\int\Big[\kappa_2(s)\cos{\phi}\Big]ds\Big)\cos{\phi}.
$$
Conversely, suppose that (\ref{u244}) holds. In order to apply Theorem \ref{th-2}, we define $f=f(s)$ by
\begin{equation}\label{u28}
f(s)=\Big(A-\int\Big[\kappa_2(s)\sin{\phi}\Big]ds\Big)\cos{\phi}\\
+
\Big(B+\int\Big[\kappa_2(s)\cos{\phi}\Big]ds\Big)\sin{\phi},
\end{equation}
with $\phi(s)=\int\kappa_3(u) du$. A direct differentiation of (\ref{u244}) gives
$$\Bigg(\dfrac{\kappa_1}{\kappa_2}\int\kappa_1ds\Bigg)'=\kappa_3 f(s)-\kappa_2.$$
This shows the left condition in (\ref{u24}). Moreover, a straightforward computation leads to
$f'(s)=-\dfrac{\kappa_3\kappa_1}{\kappa_2}\int\kappa_1ds$, which finishes the proof.
\end{proof}


\end{document}